\newtheorem{theoremA}{Theorem}
\newtheorem{theoremName}{Theorem A}
\renewcommand{\thetheoremName}
\newtheorem{proposition[[]]}[theoremName]{Proposition G}
\newtheorem{theorem}{Theorem}[section]
\newtheorem{corollary}[theorem]{Corollary}
\newtheorem{examples}[theoremName]{Examples}
\theoremstyle{definition}
\newtheorem{definition}[theorem]{Definition}
\newtheorem{remark}{Remark}
\numberwithin{equation}{section}
\newcommand{\Dis}{\displaystyle}
\newcommand{\dist}{\operatorname{dist}}
\newcommand{\Vol}{\operatorname{Vol}}
\newcommand{\Sup}{\operatorname{Sup}}
\newcommand{\kan}{\mathbb{K}^{n}(b)}
\newcommand{\kam}{\mathbb{K}^{m}(b)}
\newcommand{\erre}{\mathbb{R}}
\newcommand{\gr}{\operatorname{\nabla}}
\begin{document}

\title[Cheeger Isoperimetric Constant]
{Volume growth of submanifolds  and the Cheeger Isoperimetric Constant}
%    Information for first author
\author[V. Gimeno]{Vicent Gimeno$^{\#}$}
%    Address of record for the research reported here
\address{Departament de Matem\`{a}tiques-INIT, Universitat Jaume I, Castell\'o,
Spain.}
%    Current address
%\curraddr{}
\email{gimenov@guest.uji.es}
%    \thanks will become a 1st page footnote.
% \thanks{The first author was supported in part by NSF Grant \#000000.}

%    Information for second author
\author[V. Palmer]{Vicente Palmer$^{\#}$}
\address{Departament de Matem\`{a}tiques-INIT, Universitat Jaume I, Castell\'o,
Spain.} \email{palmer@mat.uji.es}
\thanks{$^{\#}$ Supported by 
Fundaci\'o Caixa Castell\'o-Bancaixa Grants P1.1B2006-34 and P1.1B2009-14 and by MICINN grant No. MTM2010-21206-C02-02.}

%    General info
\subjclass[2000]{Primary 53C20, 53C42}

%\dedicatory{This paper is dedicated to [[[]]].}

\keywords{Cheeger isoperimetric constant, volume growth, submanifold, Chern-Osserman inequality.}

\begin{abstract}
We obtain an estimate of the Cheeger isoperimetric constant in terms of the volume growth for a properly immersed submanifold in a Riemannian manifold which possesses at least one pole and sectional curvature bounded from above .
\end{abstract}

\maketitle

%%%%%%%%%%%%%%%%%%%%%%%%%
\section{Introduction}\label{secIntro}
%%%%%%%%%%%%%%%%%%%%%%%%%%%%%%%
The Cheeger isoperimetric constant $\mathcal{I}_\infty(M)$ (see \cite{Chee}) of a non-compact Riemannian manifold of dimension $n \geq 2$ is defined as:
\begin{equation}\label{CheegerConstant}
\mathcal{I}_\infty(M):=\inf\limits_\Omega\bigg\{  \frac{\Vol(\partial \Omega)}{\Vol(\Omega)} \bigg\}
\end{equation}
where $\Omega$ ranges over open submanifolds of $M$ possessing compact closure and smooth boundary, $\Vol(\partial \Omega)$ denotes the $(n-1)$-dimensional volume of the boundary $\partial \Omega$, and $\Vol(\Omega)$ denotes the $n$-dimensional volume of $\Omega$, (concerning this definition, see also \cite{Cha2} and \cite{Cha3}).

This paper focuses on obtaining sharp upper and lower bounds for the Cheeger isoperimetric constant $\mathcal{I}_\infty(P)$ of a complete submanifold $P$ with controlled mean curvature and properly immersed in an ambient manifold $N$ with sectional curvatures bounded from above and which possess at least one pole. 

As a consequence of these upper and lower bounds, and as a preliminary view of our main theorems (Theorems \ref{Main1} and \ref{Main2} in section \S.3), we present the following results, which constitute a particular case of them  when a complete, non-compact and minimal submanifold properly immersed in a Cartan-Hadamard manifold is considered. In contrast, if we focus on compact and minimal submanifolds of a Riemannian manifold satisfying other geometric restrictions, we refer to the work \cite{HoS}, where certain isoperimetric inequalities involving these submanifolds have been proven.

\begin{theoremA}\label{Glimpse1}
Let $P^m$ be a complete non-compact and minimal submanifold properly immersed in a Cartan-Hadamard manifold  $N$ with sectional curvatures bounded from above as $K_N \leq b \leq 0$, and suppose that  
$\Sup_{t>0}(\frac{\Vol(P\cap B^N_t)}{\Vol(B^{m,b}_t)})<\infty$, where $B^N_t$ is the geodesic $t$-ball in the ambient manifold $N$ and $B^{m,b}_t$ denotes the geodesic $t$-ball in the real space form of constant sectional curvature $\kam$.

Then 
\begin{equation}\label{statcorG1}
\mathcal{I}_{\infty}(P) \leq  (m-1)\sqrt{-b} \quad.
\end{equation}

\end{theoremA}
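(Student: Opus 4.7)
\emph{Strategy.} The plan is to exhaust $P$ by the extrinsic balls $\Omega_t := P \cap B^N_t$, use them as test domains in the definition of $\mathcal{I}_\infty(P)$, and then run a Cheeger--Yau logarithmic-derivative trick that converts the hypothesized bound on the volume growth $V(t) := \Vol(\Omega_t)$ into the stated bound on $\mathcal{I}_\infty(P)$. Fix a pole $o \in N$ and let $r = \dist_N(o,\cdot)$. Since the immersion is proper, each $\overline{\Omega_t}$ is compact in $P$; since $o$ is a pole, $r|_P$ is smooth on $P \setminus r^{-1}(0)$, and by Sard's theorem almost every $t > 0$ is a regular value of $r|_P$. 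For such $t$, $\partial \Omega_t = P \cap \partial B^N_t$ is a smooth hypersurface of $P$, $\Omega_t$ is admissible, and
\[
\mathcal{I}_\infty(P) \leq \frac{\Vol(\partial \Omega_t)}{\Vol(\Omega_t)} =: \frac{A(t)}{V(t)}.
\]

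\emph{Key ingredients.} The coarea formula applied to $r|_P$, together with the pointwise inequality $|\gr^P r| \leq |\gr^N r| = 1$ (since $\gr^P r$ is the tangential projection of $\gr^N r$), shows that $V$ is locally absolutely continuous and
\[
V'(t) = \int_{P \cap \partial B^N_t} \frac{1}{|\gr^P r|}\, dA \geq A(t) \qquad \text{a.e.}
\]
On the other hand, the warping function $\sinh(\sqrt{-b}\, s)$ (or $s$ when $b=0$) controls the asymptotics of $\Vol(B^{m,b}_t)$, and combined with the hypothesis $\Sup_{t>0} V(t)/\Vol(B^{m,b}_t) < \infty$ yields
\[
\limsup_{t \to \infty} \frac{\log V(t)}{t} \leq (m-1)\sqrt{-b}.
\]

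\emph{The logarithmic derivative step.} Assume for contradiction that $\mathcal{I}_\infty(P) > (m-1)\sqrt{-b}$, so that some $\delta > 0$ satisfies $\mathcal{I}_\infty(P) \geq (m-1)\sqrt{-b} + \delta$. Then for every regular $t$ large enough that $V(t) > 0$,
\[
\frac{V'(t)}{V(t)} \geq \frac{A(t)}{V(t)} \geq \mathcal{I}_\infty(P) \geq (m-1)\sqrt{-b} + \delta.
\]
Integrating from a reference $T$ with $V(T) > 0$ (which exists because $P$ is non-empty and properly immersed) yields $\log V(t) \geq \log V(T) + ((m-1)\sqrt{-b} + \delta)(t-T)$, hence $\liminf_{t \to \infty} (\log V(t))/t \geq (m-1)\sqrt{-b} + \delta$, contradicting the estimate of the preceding paragraph.

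\emph{Main obstacle.} Conceptually the argument is short; the technical burden is measure-theoretic, mainly in cleanly establishing the absolute continuity of $V$ and the inequality $V'(t) \geq A(t)$ almost everywhere, while controlling the loci where $|\gr^P r|$ vanishes or $t$ fails to be a regular value of $r|_P$. Notice that neither minimality of $P$ nor the upper curvature bound on $N$ is invoked directly in the upper bound above; they enter only through ensuring that the volume-growth hypothesis $\Sup_{t}V(t)/\Vol(B^{m,b}_t) < \infty$ is a geometrically meaningful condition (as in the Chern--Osserman context alluded to in the introduction), and so is not vacuous.
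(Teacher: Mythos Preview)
Your argument is correct and takes a genuinely different, more elementary route than the paper. The paper proves Theorem~A as the special case of its Theorem~\ref{Main1}: it invokes the isoperimetric comparison Theorem~\ref{thmIsopGeneral1} (which rests on the Hessian comparison and therefore uses both minimality and the bound $K_N\le b$) to show that $t\mapsto \Vol(D_t)/\Vol(B^{W}_t)$ is nondecreasing; combined with the boundedness hypothesis this forces $\int^\infty F<\infty$ for $F(t)=(\log(\Vol(D_t)/\Vol(B^W_t)))'$, hence $F(t_i)\to 0$ along a subsequence, and then $\mathcal{I}_\infty(P)\le \Vol(\partial D_{t_i})/\Vol(D_{t_i})\le \Vol(D_{t_i})'/\Vol(D_{t_i})\to \Vol(S^W_{t_i})/\Vol(B^W_{t_i})\to (m-1)\sqrt{-b}$. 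You bypass all of the comparison machinery: from the definition of $\mathcal{I}_\infty$ and the coarea inequality $V'\ge A$ you get $(\log V)'\ge \mathcal{I}_\infty(P)$ a.e., integrate, and compare with the exponential volume growth rate $(m-1)\sqrt{-b}$ of $B^{m,b}_t$. As you observe, your proof uses neither minimality nor the curvature bound on $N$; it shows more generally that any properly immersed submanifold of a manifold with a pole satisfying the volume-growth hypothesis obeys the same bound, so the geometric assumptions really only serve to make that hypothesis natural. The paper's route, by contrast, is tailored to its broader comparison-constellation framework and yields as a byproduct the monotonicity of the volume ratio, which is of independent interest. One small remark on your ``main obstacle'': since $V$ is nondecreasing, so is $\log V$, and Lebesgue's theorem for monotone functions already gives $\log V(t)-\log V(T)\ge \int_T^t(\log V)'(s)\,ds$; this is exactly the inequality direction you need, so no absolute-continuity argument or analysis of the critical set of $r|_P$ is actually required.
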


\begin{theoremA}\label{Glimpse2}
Let $P^m$ be a complete non-compact and minimal submanifold properly immersed in a Cartan-Hadamard manifold  $N$ with sectional curvatures bounded from above as $K_N \leq b \leq 0$.
Then 
\begin{equation}\label{statcorG1}
\mathcal{I}_{\infty}(P) \geq  (m-1)\sqrt{-b} \quad.
\end{equation}

\end{theoremA}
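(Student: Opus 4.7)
My plan is to exploit the classical divergence-theorem trick applied to the extrinsic distance function from the pole, which for minimal submanifolds of an ambient manifold with an upper curvature bound admits a sharp Laplacian comparison.

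Let $o \in N$ be a pole and set $r(x) = \dist_N(o, x)$. Since $K_N \leq b \leq 0$, by the Hessian comparison theorem for manifolds with a pole (as used in Jorge--Koutroufiotis, Markvorsen--Palmer, and related works), on any minimal submanifold $P^m$ the Laplacian of the restricted function $r\big|_P$ satisfies
\begin{equation*}
\Delta^P r \;\geq\; (m-1)\,\frac{h_b'(r)}{h_b(r)},
\end{equation*}
where $h_b(t) = \sinh(\sqrt{-b}\,t)/\sqrt{-b}$ is the Jacobi function of curvature $b$, the minimality killing the mean-curvature correction term that would otherwise appear. Since $\coth(\sqrt{-b}\,t) \geq 1$ for all $t>0$, this yields the pointwise bound
\begin{equation*}
\Delta^P r \;\geq\; (m-1)\sqrt{-b}\,\coth(\sqrt{-b}\,r) \;\geq\; (m-1)\sqrt{-b}.
\end{equation*}

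Now fix a precompact domain $\Omega \subset P$ with smooth boundary. Note that $r\big|_P$ is smooth away from the pole (and in fact globally, since $N$ is Cartan--Hadamard, so the only possible singular point is $o$ itself, which can be circumvented by a standard approximation or because $r$ is smooth on $N \setminus \{o\}$ and we can assume $\Omega$ avoids $o$ or treat the pole via a small geodesic ball limit). The divergence theorem gives
\begin{equation*}
\int_\Omega \Delta^P r \, dV \;=\; \int_{\partial \Omega} \langle \gr^P r, \nu\rangle \, dS \;\leq\; \int_{\partial \Omega} |\gr^P r| \, dS \;\leq\; \Vol(\partial \Omega),
\end{equation*}
the last inequality using $|\gr^P r| \leq |\gr^N r| = 1$. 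Combining with the Laplacian lower bound,
\begin{equation*}
(m-1)\sqrt{-b}\,\Vol(\Omega) \;\leq\; \Vol(\partial \Omega),
\end{equation*}
and taking the infimum over admissible $\Omega$ gives $\mathcal{I}_\infty(P) \geq (m-1)\sqrt{-b}$.

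The proof is essentially routine once the Laplacian comparison is in place; the only subtle point is justifying the use of the divergence theorem when $\Omega$ may contain the pole or when $r\big|_P$ fails to be $C^2$ at extrinsic conjugate loci — but in the Cartan--Hadamard setting the exponential map at $o$ is a diffeomorphism, so $r$ is smooth on $N\setminus\{o\}$, and the singular point can be handled by excising a small geodesic ball and letting its radius tend to zero, whose boundary contribution vanishes. No hypothesis on the volume growth of $P$ is needed for this lower bound, in contrast to the upper bound of Theorem~A.
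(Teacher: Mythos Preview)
Your proof is correct and follows essentially the same route as the paper: apply the Hessian/Laplacian comparison to the extrinsic distance to get $\Delta^P r \geq (m-1)\eta_{w_b}(r)$ in the minimal case, use $\eta_{w_b}(r)=\sqrt{-b}\coth(\sqrt{-b}\,r)\geq \sqrt{-b}$, and then run the divergence theorem on an arbitrary precompact domain $\Omega\subset P$ with $\langle \nabla^P r,\nu\rangle\leq 1$. The paper packages this inside the more general framework of an isoperimetric comparison space $M^m_W$ (Theorem~\ref{Main2}) and only afterwards specializes to $h\equiv 0$, $w=w_b$, $W=w_b$, but the specialized argument is exactly yours.
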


The lower bounds for $\mathcal{I}_\infty(P)$ in Theorem \ref{Glimpse2} come from direct application of the divergence theorem to the Laplacian of the extrinsic distance defined on the submanifold using the distance in the ambient manifold, following the arguments of Proposition 3 in \cite{Y} and of Theorem 6.4 in \cite{Cha3}.

On the other hand, the upper bounds in Theorem \ref{Glimpse1} were obtained by assuming that the (extrinsic) volume growth of the submanifold is bounded from above by a finite quantity. As we shall see in the corollaries, when the submanifold is a minimal immersion in the Euclidean space or when we are dealing with minimal surfaces in the Euclidean or the Hyperbolic space, this crucial fact relates Cheeger's constant $\mathcal{I}_\infty(P)$ with the total extrinsic curvature of the submanifold  $\int_P \Vert B^P\Vert^{m} d\sigma$, in the sense that the finiteness of this total extrinsic curvature implies the upper bounds for Cheeger's constant, using the results in \cite{A1}, \cite{QCh1} and \cite{QCh3}.

These lower and upper bounds of $\mathcal{I}_\infty(P)$ given in Theorems \ref{Main1} and \ref{Main2} 
come from comparisons for the Laplacian of the extrinsic distance defined on the submanifold, and the techniques used to obtain these comparisons  are based on the Hessian analysis of this restricted distance function. When the extrinsic curvature of the submanifold is bounded (from above or from below), this analysis focuses on the relation, given in \cite{GreW}, between the Hessian of this function and these (extrinsic) curvature bounds, thus providing comparison results for the Hessian and the Laplacian of the distance function in the submanifold. 

The model used in these comparisons is constructed from the corresponding values for these operators computed for the intrinsic distance of a rotationally symmetric space whose  sectional curvatures bound the corresponding curvatures of the ambient manifold. 

We shall see that the Cheeger constant $\mathcal{I}_\infty(P)$ is bounded by the limit of some isoperimetric quotient determined by the geodesic $r$-balls in these model spaces, which involves the mean curvature of the submanifold.
%%%%%%%%%%%%%%%%%%%%%%%%%%%%%%%%%%%%%%%%%%%%%%%%%%%
%SUBSECTION  Outline
%%%%%%%%%%%%%%%%%%%%%%%%%%%%%%%%%%%%%%%%%%%%%%%%%%%
\subsection{Outline of the paper}
 In section \S.2 we present the basic definitions and facts concerning the extrinsic distance restricted to a submanifold, and about the rotationally symmetric spaces used as a model for comparison. We also present the basic results regarding the Hessian comparison theory of restricted distance function that will be used. This section finishes with the description of the isoperimetric context where the results hold. Section \S.3 is devoted to the statement and proof of the two main Theorems \ref{Main1} and \ref{Main2} and three corollaries are stated and proven in the final section \S.4. 
 
\bigskip
%%%%%%%%%%%%%%%%%%%%%%%%%%%%%%%%%
\section{Preliminaires} \label{Prelim}  
%%%%%%%%%%%%%%%%%%%%%%%%%%%%%%%%%%%%%

%%%%%%%%%%%%%%%%%%%%%%%%
\subsection{The extrinsic distance}
%%%%%%%%%%%%%%%

We assume throughout the paper that $P^{m}$ is a complete,  non-compact, properly immersed,
$m$-dimensional submanifold in a complete Riemannian manifold $N^n$ which possesses at least one pole $o\in N$. Recall that a pole
is a point $o$ such that the exponential map
$$\exp_{o}\colon T_{o}N^{n} \to N^{n}$$ is a
diffeomorphism. For every $x \in N^{n}\setminus \{o\}$ we
define $r(x) = r_o(x) = \dist_{N}(o, x)$, and this
distance is realized by the length of a unique
geodesic from $o$ to $x$, which is the {\it
radial geodesic from $o$}. We also denote by $r$
the restriction $r\vert_P: P\to \erre_{+} \cup
\{0\}$. This restriction is called the
{\em{extrinsic distance function}} from $o$ in
$P^m$. The gradients of $r$ in $N$ and $P$ are
denoted by $\gr^N r$ and $\gr^P r$,
respectively. Let us remark that $\gr^P r(x)$
is just the tangential component in $P$ of
$\gr^N r(x)$, for all $x\in S$. Then we have
the following basic relation:
\begin{equation}\label{radiality}
\nabla^N r = \gr^P r +(\gr^N r)^\bot 
\end{equation}
where $(\gr^N r)^\bot(x)=\nabla^\bot r(x)$ is perpendicular to
$T_{x}P$ for all $x\in P$.

\begin{definition}\label{ExtBall}
Given a connected and complete
submanifold $P^m$ properly immersed in a manifold $N^n$ with a pole $o \in N$, we
denote
the {\em{extrinsic metric balls}} of radius $t >0$ and center $o \in N$ by
$D_t(o)$. They are defined as  the intersection
$$
B^N_{t}(o) \cap P =\{x\in P \colon r(x)< t\}
$$
where $B^N_{t}(o)$ denotes the open geodesic ball
of radius $t$ centered at the pole $o$ in
$N^{n}$.
\end{definition}

\begin{remark}\label{theRemk0}
The extrinsic domains $D_t(o)$
are precompact sets (because in the definition above it was assumed that  the submanifold $P$ is properly immersed), 
with smooth boundary $\partial D_t(o)$.  The assumption on the smoothness of
$\partial D_{t}(o)$ makes no restriction. Indeed, 
the distance function $r$ is smooth in $N \setminus \{o\}$ 
since $N$ is assumed to possess a pole $o\in N$. Hence
the restriction $r\vert_P$ is smooth in $P$ and consequently the
radii $t$ that produce smooth boundaries
$\partial D_{t}(o)$ are dense in $\mathbb{R}$ by
Sard's theorem and the Regular Level Set Theorem.

\end{remark}

We now present the curvature restrictions which constitute the geometric framework of our study.

\begin{definition}
Let $o$ be a point in a Riemannian manifold $N$
and let $x \in N-\{ o \}$. The sectional
curvature $K_{N}(\sigma_{x})$ of the two-plane
$\sigma_{x} \in T_{x}N$ is then called a
\textit{$o$-radial sectional curvature} of $N$ at
$x$ if $\sigma_{x}$ contains the tangent vector
to a minimal geodesic from $o$ to $x$. We denote
these curvatures by $K_{o, N}(\sigma_{x})$.
\end{definition}

In order to control the mean curvatures $H_P(x)$ of $P^{m}$ at distance $r$ from
$o$ in $N^{n}$ we introduce the following definition:

\begin{definition} The $o$-radial mean curvature function for $P$ in $N$
is defined in terms of the inner product of $H_{P}$ with the $N$-gradient of the
distance function $r(x)$ as follows:
$$
\mathcal{C}(x) = -\langle \nabla^{N}r(x),
H_{P}(x) \rangle  \quad {\textrm{for all}}\quad x
\in P \,\, .
$$
\end{definition} 

%%%%%%%%%%%%%%%%%%%%%%%%%%%%%%%%%%%%%%%%%%%%%%%%%%
\subsection{Model Spaces} \label{secModel}
%%%%%%%%%%%%%%%%%%%%%%%%%%%%%%%%%%%%%%%%%%%%%%%%%%

The model spaces $M^m_w$ are rotationally symmetric spaces which serve
as com\-pa\-ri\-son controllers for the radial sectional
curvatures of the ambient space $N^{n}$.

\begin{definition}[see \cite{Gri}, \cite{GreW}]
 A $w-$model $M_{w}^{m}$ is a
smooth warped product with base $B^{1} = [\,0,\, R[ \,\,\subset\,
\mathbb{R}$ (where $\, 0 < R \leq \infty$\,), fiber $F^{m-1} =
S^{m-1}_{1}$ (i.e., the unit $(m-1)-$sphere with standard metric),
and warping function $w:\, [\,0, \,R[\, \to \mathbb{R}_{+}\cup
\{0\}\,$ with $w(0) = 0$, $w'(0) = 1$, and $w(r) > 0\,$ for all
$\, r > 0\,$. The point $o_{w} = \pi^{-1}(0)$, where $\pi$ denotes
the projection onto $B^1$, is called the {\em{center point}} of
the model space. If $R = \infty$, then $o_{w}$ is a pole of
$M_{w}^{m}$.
\end{definition}

\begin{remark}\label{propSpaceForm}
The simply connected space forms $\mathbb{K}^{m}(b)$ of constant
curvature $b$ can be constructed as  $w-$models $\kan =M^n_{w_b}$ with any given
point as the center point using the warping functions
\begin{equation}
w_b(r) =\begin{cases} \frac{1}{\sqrt{b}}\sin(\sqrt{b}\, r) &\text{if $b>0$}\\
\phantom{\frac{1}{\sqrt{b}}} r &\text{if $b=0$}\\
\frac{1}{\sqrt{-b}}\sinh(\sqrt{-b}\,r) &\text{if $b<0$} \quad .
\end{cases}
\end{equation}
Note that for $b > 0$ the function $w_{b}(r)$ admits a smooth
extension to  $r = \pi/\sqrt{b}$. For $\, b \leq 0\,$ any center
point is a pole.
\end{remark}
\begin{remark}
The sectional curvatures of the model spaces $K_{o_{w} , M_{w}}$ in the radial directions from the center
point are determined
by the radial function ${\Dis K_{o_{w} , M_{w}}(\sigma_{x}) \, = \, K_{w}(r) \, = \,
-\frac{w''(r)}{w(r)}}$, (see  \cite{GreW}, \cite{Gri} \cite{O'N}). Moreover,
 the mean curvature of the distance sphere of radius $r$ from the center point is
\begin{equation}\label{eqWarpMean}
\eta_{w}(r)  = \frac{w'(r)}{w(r)} = \frac{d}{dr}\ln(w(r))\quad .
\end{equation} \\

Hence, the sectional curvature of $\kan$ is given by ${\Dis -\frac{w_b''(r)}{w_b(r)}=b}$ and  the mean curvature of the geodesic $r-$sphere $S^{w_b}_r=S^{b,n-1}_r$ in the real space form $\kan$, \lq pointed inward' is (see \cite{Pa}):
$$
\eta_{w_b}=h_b(t)=\left\{
\begin{array}{l}
\sqrt{b}\cot\sqrt{b}t\,\,\text{  if }\,\,b>0\\
1/t\,\,\text{  if }\,\, b=0\\
\sqrt{-b}\coth\sqrt{-b}t\,\, \text{  if }\,\, b<0 \quad.
\end{array}\right.
$$
\end{remark}

In particular, in \cite{MP2} we introduced,
for any given warping function $\,w(r)\,$, the
isoperimetric quotient function $\,q_{w}(r)\,$  for the
corresponding $w-$mo\-del space $\,M_{w}^{m}\,$ as follows:
\begin{equation} \label{eqDefq}
q_{w}(r) \, = \, \frac{\Vol(B_{r}^{w})}{\Vol(S_{r}^{w})} \, = \,
\frac{\int_{0}^{r}\,w^{m-1}(t)\,dt}{w^{m-1}(r)} \quad .\\
\end{equation}
\noindent where $B^{w}_r$ and $S^{w}_r$ denotes  the metric $r-$ball and the metric $r-$sphere in $M^m_w$ respectively.
%%%%%%%%%%%%%%%%%%%%%%%%
\subsection{Hessian comparison analysis of the extrinsic distance}\label{subsecLap}
%%%%%%%%%%%%%%%

 This subsection offers a corollary
of the Hessian comparison Theorem A in \cite{GreW}, which concerns the bounds for the Laplacian of a radial function defined on the submanifold (see \cite{HMP} and  \cite{Pa2} for detailed computations, see also \cite{JK}).
\begin{theorem} \label{corLapComp} Let $N^{n}$ be a manifold with a pole $o$ and let $M_{w}^{m}$ denote a $w-$model
with center $o_{w}$. Let $P^m$ be a properly immersed submanifold in $N$. Then we have the following dual Laplacian inequalities for modified distance functions $f\circ r: P \longrightarrow \erre$:\\

 Suppose that every $o$-radial sectional curvature at $x
\in N - \{o\}$ is bounded  by the $o_{w}$-radial sectional
curvatures in $M_{w}^{m}$ as follows:
\begin{equation}
\mathcal{K}(\sigma(x)) \, = \, K_{o, N}(\sigma_{x})
\leq-\frac{w''(r)}{w(r)}\quad .
\end{equation}

Then we have for every smooth function $f(r)$ with $f'(r) \leq
0\,\,\textrm{for all}\,\,\, r$, (respectively $f'(r) \geq
0\,\,\textrm{for all}\,\,\, r$):
\begin{equation} \label{eqLap2}
\begin{aligned}
\Delta^{P}(f \circ r) \, \leq (\geq) \, &\left(\, f''(r) -
f'(r)\eta_{w}(r) \, \right)
 \Vert \nabla^{P} r \Vert^{2} \\ &+ mf'(r) \left(\, \eta_{w}(r) +
\langle \, \nabla^{N}r, \, H_{P}  \, \rangle  \, \right)  \quad ,
\end{aligned}
\end{equation}
where $H_{P}$ denotes the mean curvature vector of $P$ in $N$.
\end{theorem}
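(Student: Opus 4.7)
The plan is to derive (\ref{eqLap2}) by combining the chain rule for the Hessian on $N$, the Gauss formula that relates the $P$-Hessian of a function defined on $N$ to its $N$-Hessian plus a term involving the second fundamental form, and the classical Hessian comparison theorem of Greene--Wu applied to the distance function $r$ on $N$.

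First I would compute, for any vectors $X,Y \in T_{x}N$, the ambient Hessian
\begin{equation*}
\Hess^{N}(f\circ r)(X,Y) \, = \, f''(r)\langle \nabla^{N}r, X\rangle \langle \nabla^{N}r, Y\rangle + f'(r)\Hess^{N}r(X,Y).
\end{equation*}
I would then use the submanifold identity (a direct consequence of the Gauss formula): for a smooth function $u$ on $N$ and $X,Y$ tangent to $P$,
\begin{equation*}
\Hess^{P}(u\vert_{P})(X,Y) \, = \, \Hess^{N}(u)(X,Y) + \langle \nabla^{N}u, B^{P}(X,Y)\rangle,
\end{equation*}
where $B^{P}$ denotes the second fundamental form of $P$ in $N$. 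Tracing this over an orthonormal basis $\{e_{i}\}_{i=1}^{m}$ of $T_{x}P$, using $\sum_{i=1}^{m}B^{P}(e_{i},e_{i})=mH_{P}$ and $\sum_{i=1}^{m}\langle \nabla^{N}r, e_{i}\rangle^{2}=\Vert \nabla^{P}r\Vert^{2}$ as dictated by the orthogonal decomposition (\ref{radiality}), I would obtain
\begin{equation*}
\Delta^{P}(f\circ r) \, = \, f''(r)\Vert \nabla^{P}r\Vert^{2} + f'(r)\sum_{i=1}^{m}\Hess^{N}r(e_{i},e_{i}) + mf'(r)\langle \nabla^{N}r, H_{P}\rangle.
\end{equation*}

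Next I would invoke the Hessian comparison theorem of Greene--Wu: under the curvature hypothesis $\mathcal{K}(\sigma(x)) \le -w''(r)/w(r)$, the ambient Hessian of $r$ dominates the model Hessian, so that
\begin{equation*}
\Hess^{N}r(X,X) \, \ge \, \eta_{w}(r)\bigl(\Vert X\Vert^{2} - \langle \nabla^{N}r, X\rangle^{2}\bigr)
\end{equation*}
for every $X\in T_{x}N$. Tracing this lower bound over $T_{x}P$ yields $\sum_{i=1}^{m}\Hess^{N}r(e_{i},e_{i}) \ge \eta_{w}(r)\bigl(m-\Vert \nabla^{P}r\Vert^{2}\bigr)$. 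Multiplying by $f'(r)$, which preserves the inequality when $f'(r)\ge 0$ and reverses it when $f'(r)\le 0$, and substituting back into the expression for $\Delta^{P}(f\circ r)$, a direct rearrangement that collects the $\Vert \nabla^{P}r\Vert^{2}$ terms produces (\ref{eqLap2}) with its two dual senses. The argument is essentially mechanical once the Hessian comparison is in hand; the only delicate point is the bookkeeping of the sign reversal induced by the sign of $f'(r)$, combined with the orthogonal splitting $\nabla^{N}r=\nabla^{P}r+(\nabla^{N}r)^{\perp}$ that converts traces over $T_{x}P$ of ambient tensors into the intrinsic quantities $\Vert \nabla^{P}r\Vert^{2}$ and $\langle \nabla^{N}r, H_{P}\rangle$ appearing in the statement.
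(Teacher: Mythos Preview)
Your proposal is correct and follows exactly the standard route: chain rule for $\Hess^{N}(f\circ r)$, the Gauss formula to pass to $\Hess^{P}$, the Greene--Wu Hessian comparison for $r$ under the curvature upper bound, and then tracing over $T_{x}P$ with the sign of $f'(r)$ dictating the direction of the inequality. The paper itself does not give a proof of this statement; it simply records it as a corollary of the Greene--Wu Hessian comparison theorem and refers to \cite{HMP} and \cite{Pa2} for the detailed computations, which are precisely the ones you have outlined.
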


%%%%%%%%%%%%%%%%%%%%%%%%%%%%%%%%%%%%%%%%%%%%%%%%%%
\subsection{The Isoperimetric Comparison space}\label{secIsopCompSpace} 
%%%%%%%%%%%%%%%%%%%%%%%%%%%%%%%%%%%%%%%%%%%%%%%%%%
We are going to define a new kind of model spaces, $M^m_W$. The limit  ${\Dis \lim_{r \to \infty} \frac{W'(r)}{W(r)}}$ of the quotient determined by its warping function (this quotient is given in terms of the mean curvature of the geodesic spheres in $M^m_W$ and the bounds on the mean curvature of the submanifold ${\Dis P}$) will serve as estimate for the isoperimetric constant $\mathcal{I}_\infty(P)$.

\begin{definition}[ \cite{MP3}] \label{defCspace}
Given the smooth functions $w: \erre_+ \longrightarrow \erre_+$ and $h:\erre_+ \longrightarrow \erre$ with $w(0)=0$, $w'(0)=1$ and $-\infty < h(0) < \infty$, the {\em{isoperimetric comparison space}} $M^m_W$ is
the $W-$model space with base interval $B\,= \, [\,0, R\,]$ and
warping function $W(r)$ defined by
the following differential
equation:
\begin{equation} \label{eqLambdaDiffeq}
%\begin{aligned}
\frac{W'(r)}{W(r)}\, = \,\eta_w(r)\,-\, \frac{m}{m-1} h(r) \\
\quad.
%\end{aligned}
\end{equation}
and the following boundary condition:
\begin{equation} \label{eqTR}
W'(0) = 1
\quad .
\end{equation}

\end {definition}
By using equation (\ref{eqTR}), it is straightforward to see that $W(r) = 0$ only at $r=0$, so $M^m_W$ has a well-defined pole $o_{W}$ at $r = 0$. Moreover, $W(r) > 0$ for all $r>0$.\\

%Of course as a model space constructed in this way $M^m_W=C_{w,g,h}^{m}$ inherits all its properties
%from the bounding functions $w$, $g$, and $h$ from which it is molded
%in the first place.

Note that when $h(r)=0\,{\textrm{for all }} r$,
then $W(r)=w(r) \,{\textrm{for all }} r$, so $M^m_W$ becomes
a model space with warping function $w$, $M^m_w$.\\

\begin{definition} \label{defBalCond1}
The model space $M_{W}^{m} $ is
{\em{$w-$balanced from above}} (with respect to the intermediary model space $M_{w}^{m}$)  iff
the following holds for all $r \in \, [\,0, R\,]$:
\begin{equation}\label{eqBalB}
\begin{aligned}
\eta_{w}(r) &\geq 0\\
\eta'_W(r) &\leq 0\,\,\forall r \, \quad .
\end{aligned}
\end{equation}
Note that $\eta'_W(r) \leq 0\,\,\forall r $ is equivalent to the condition 
\begin{equation}\label{eqBalB1}
-(m-1)(\eta^2_{w}(r) + K_w(r) ) \, \leq m h'(r) \quad .
\end{equation}
\end{definition}

\begin{definition} \label{defBalCond2}
The model space $M_{W}^{m} \,$ is
{\em{$w-$balanced from below}} (with respect to the intermediary model space $M_{w}^{m}$) iff
the following holds for all $r \in \, [\,0, R\,]$:
\begin{equation}\label{eqBalA}
q_{W}(r)\left(\eta_{w}(r) - h(r) \right) \, \geq 1/m \quad .\\
\end{equation}
\end{definition}

\begin{examples}
The following is a list of examples of isoperimetric comparison spaces and balance.
\begin{enumerate}

\item Given the functions $w_b(r)$ and $h(r)=C \geq \sqrt{-b}, \,\,\,\forall r>0$, let us consider $\kam= M^m_{w_b}$ as an intermediary model space with constant sectional curvature $b <0$. Then, it is straightforward to check that the model space $M^m_{W}$ defined from $w_b$ and $h$ as in Definition \ref{defCspace} is $w_b-$balanced from above, and is not $w_b-$balanced from below.
\medskip

\item Let $M^m_w$ be a model space, with $w(r)=e^{r^2}+r-1$. Let us now consider  $h(r)=0 \,\,\forall r>0$. In this case, as $h(r)=0$, then $W(r)=w(r)$, so the isoperimetric comparison space $M^m_W$ agrees with its corresponding intermediary model space $M^m_w$. Moreover, (see \cite{MP2}), 
$$q_w(r)\eta_w(r) \geq \frac{1}{m}\quad .$$ 
so $M^m_w$ is $w$-balanced from below. 

However, it is easy to see
that $\eta_w(r)=\frac{2re^{r^2}+1}{e^{r^2}+r-1}$ is an increasing function from a given value $r_0 >0$ and, hence, does not satisfy second inequality in (\ref{eqBalB}) and is therefore not $w$-balanced from above.
\medskip

\item Let $\kam=M^m_{w_b}$, ($b \leq 0$), be the Euclidean or Hyperbolic space, with warping function $w_b(r)$. Let us consider $h(r)=0 \,\,\,\forall r$. In this context, these spaces are isoperimetric spaces with themselves as intermediary spaces, and satisfy both balance conditions given in definitions \ref{defBalCond1} and \ref{defBalCond2} (see \cite{MP2}).\\

\end{enumerate}
\end{examples}

%%%%%%%%%%%%%%%%%%%%%%%%%%%%%%%%%%%%%%%%%%%%%%%%%%%
\subsection{Comparison Constellations}
%%%%%%%%%%%%%%%%%%%%%%%%%%%%%%%%%%%%%%%%%%%%%%%%%%

We now present the precise settings
where our main results take place, and introduce the notion of {\em comparison constellations}.

\begin{definition}\label{defConstellatNew2}
Let $N^{n}$ denote a Riemannian manifold with a
pole $o$ and distance function $r \, = \, r(x) \,
= \, \dist_{N}(o, x)$. Let $P^{m}$ denote a complete and properly  immersed submanifold in
$N^{n}$. Suppose the following  conditions are
satisfied for all $x \in P^{m}$ with $r(x) \in
[\,0, R]\,$:
\begin{enumerate}[(a)]
\item The $o$-radial sectional curvatures of $N$ are bounded from above
by the $o_{w}$-radial sectional curvatures of
the $w-$model space $M_{w}^{m}$:
$$
\mathcal{K}(\sigma_{x}) \, \leq \,
-\frac{w''(r(x))}{w(r(x))} \quad .
$$

\item The $o$-radial mean curvature of $P$ is bounded from above by
a smooth radial function, (the {\em bounding function}) $h: \erre_+ \longrightarrow  \erre$, ($h(0) \in ]-\infty,\infty[$):
$$
\mathcal{C}(x)  \leq h(r(x)) \quad.
$$
\end{enumerate}

Let $M_W^{m}$ denote the $W$-model with the
specific warping function $W: \pi(M^m_W)
\to \mathbb{R}_{+}$ constructed in
Definition \ref{defCspace} via $w$, and $h$.
Then the triple $\{ N^{n}, P^{m}, M^m_W
\}$ is called an {\em{isoperimetric comparison
constellation}} on the interval $[\,0, R]\,$.
\end{definition}

\begin{examples} 
Minimal and non-minimal settings will now be described.

\begin{enumerate}

\item Minimal submanifolds immersed in an ambient Cartan-Hadamard manifold: let $P$ be a minimal submanifold of a Cartan-Hadamard manifold $N$, with sectional curvatures bounded above by $b \leq 0$. Let us consider the function  $h(r)=0\,\,\forall r \geq 0$ as the bounding function for the $o$-radial mean curvature of $P$ and the functions $w_b(r)$ with $b \leq 0$ as the warping function $w(r)$.

It is straigthforward to see that, under these restrictions, $W=w_b$ and, hence, $M^m_{W}=\kam$, so $\,\{N^{n},\,P^{m},\,\kam \} \,$ is an {\em{isoperimetric comparison
constellation}} on the interval $[\,0, R]\,$, for all $R > 0$. Here the model space $M^m_{W}=M^m_{w_b}=\kam$ is $w_b$-balanced from above and from below.

\medskip

\item Non-minimal submanifolds immersed in an ambient Cartan-Hadamard manifold. Let us consider again a Cartan-Hadamard manifold $N$, with sectional curvatures bounded above by $a \leq 0$. Let $P^m$ be a properly immersed submanifold in $N$ such that $$
\mathcal{C}(x)  \leq h_{a,b}(r(x)) \quad.
$$
where, by fixing $a<b<0$,  we define $h_{a,b}(r)=\frac{m-1}{m}(\eta_{w_a}(r)-\eta_{w_b}(r)) \,\,\forall r>0$. 

Then, it is straightforward to check that $W=w_b$ and, hence, $M^m_{W}=\kam$, so $\{ N^n,P^m, M^m_W\}$ is an {\em{isoperimetric comparison
constellation}} on the interval $[\,0, R]\,$, for all $R > 0$. Moreover the model space $M^m_{W}=M^m_{w_b}=\kam$ is $w_a$-balanced from above and from below.
\end{enumerate}

\end{examples}
\bigskip

%%%%%%%%%%%%%%%%%%%%%%%%%%%%%
\section{Main results}\label{Main}
%%%%%%%%%%%%%%%%%%%%%%%%%%%%%%

Before stating our main theorems, we find the upper bounds for the isoperimetric quotient defined as the
volume of the extrinsic sphere divided by the volume of the
extrinsic ball, in the setting given by the comparison
constellations. 

\begin{theorem} \label{thmIsopGeneral1} (see \cite{HMP}, \cite{Pa}, \cite{MP})
 Consider an isoperimetric comparison
constellation $\{ N^{n}, P^{m}, M^m_W
\}$.
 Assume that the isoperimetric comparison space
 $\,M^m_W\,$ is $w$-balanced from below.
Then
\begin{equation} \label{eqIsopGeneralB}
\frac{\Vol(\partial D_{t})}{\Vol(D_{t})} \geq
\frac{\Vol(
S^{W}_{t})}{\Vol(B^{W}_{t})} \quad.
\end{equation}

Furthermore, the function $f(t)=\frac{\Vol(D_{t})}{\Vol(B^{W}_{t})}$ is monotone non-decreasing in $t$.

Moreover, if equality holds in (\ref{eqIsopGeneralB}) for some fixed radius $t_0 >0$, then $D_{t_0}$ is a cone
in the ambient space $N^n$.
\end{theorem}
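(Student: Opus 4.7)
The plan is to apply the Laplacian comparison (Theorem \ref{corLapComp}) to a carefully chosen radial transplanted function on $P$ and then to integrate the resulting pointwise inequality over $D_t$ via the divergence theorem. The natural test function is $\Phi(r)=\int_0^r q_W(s)\,ds$, for which $\Phi'(r)=q_W(r)\geq 0$, and differentiating the definition (\ref{eqDefq}) of $q_W$ gives $\Phi''(r)=1-(m-1)\,\eta_W(r)\,q_W(r)$.

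The algebraic heart of the argument is to substitute the defining identity $\eta_W(r)=\eta_w(r)-\tfrac{m}{m-1}h(r)$ from (\ref{eqLambdaDiffeq}) into $\Phi''-\Phi'\eta_w$ to obtain
\[
\Phi''(r)-\Phi'(r)\,\eta_w(r)\;=\; 1\;-\;m\,q_W(r)\bigl(\eta_w(r)-h(r)\bigr).
\]
The $w$-balance from below hypothesis (\ref{eqBalA}) is precisely the statement that this expression is non-positive; since $\|\nabla^P r\|^2\leq 1$, multiplying by $\|\nabla^P r\|^2$ only moves it upward. Combining this with the extrinsic bound $\mathcal{C}(x)\leq h(r(x))$, i.e.\ $\langle\nabla^N r,H_P\rangle\geq -h(r)$, and with $\Phi'\geq 0$, the Laplacian comparison applied in the $f'\geq 0$ case collapses to the clean pointwise inequality $\Delta^P(\Phi\circ r)\geq 1$ everywhere on $P$.

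Integrating this over $D_t$ and applying the divergence theorem then yields
\[
\Vol(D_t)\;\leq\;\int_{D_t}\Delta^P(\Phi\circ r)\,dV\;=\;q_W(t)\int_{\partial D_t}\|\nabla^P r\|\,dA\;\leq\;q_W(t)\,\Vol(\partial D_t),
\]
which is (\ref{eqIsopGeneralB}) after identifying $1/q_W(t)=\Vol(S^W_t)/\Vol(B^W_t)$. For the monotonicity of $f(t)=\Vol(D_t)/\Vol(B^W_t)$, I would rerun the same chain using the co-area identity $V'(t)=\int_{\partial D_t}\|\nabla^P r\|^{-1}\,dA$, with $V(t):=\Vol(D_t)$, together with $\|\nabla^P r\|^{-1}\geq\|\nabla^P r\|$, to arrive at $V(t)\leq q_W(t)\,V'(t)$; this is equivalent to $(\log V(t))'\geq (\log\Vol(B^W_t))'$ and the claim then follows by integrating from $0$ to $t$.

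For the rigidity, if equality holds in (\ref{eqIsopGeneralB}) at some $t_0>0$, then every inequality in the chain above must be an equality on $D_{t_0}$; in particular $\|\nabla^P r\|\equiv 1$ throughout $D_{t_0}$, so $\nabla^N r$ is tangent to $P$ at every point. This forces the minimizing $N$-geodesic from the pole $o$ to each $x\in D_{t_0}$ to stay entirely inside $P$, which is exactly the statement that $D_{t_0}$ is a cone in $N$ with vertex $o$. The main technical obstacle I foresee is the bookkeeping in the key identity above: substituting the ODE for $\eta_W$ and shepherding all coefficients so that the constant terms cancel precisely is what pins down the specific normalization $\Phi'=q_W$ rather than any cruder test function, and it is also what makes the balance condition (\ref{eqBalA}) enter at exactly the right place.
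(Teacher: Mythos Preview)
The paper does not actually prove Theorem \ref{thmIsopGeneral1}; it is stated as a known result with citations to \cite{HMP}, \cite{Pa}, \cite{MP} and then used as an input in the proofs of Theorems \ref{Main1} and \ref{Main2}. Your argument is correct and coincides with the standard proof found in those references: the choice $\Phi'(r)=q_W(r)$ is exactly the transplanted radial function used there, the identity $\Phi''-\Phi'\eta_w=1-m\,q_W(\eta_w-h)$ together with the $w$-balance from below (\ref{eqBalA}) yields $\Delta^P(\Phi\circ r)\geq 1$ via Theorem \ref{corLapComp}, and the divergence/co-area steps then give (\ref{eqIsopGeneralB}), the monotonicity of $\Vol(D_t)/\Vol(B^W_t)$, and the cone rigidity from $\|\nabla^P r\|\equiv 1$, precisely as you describe.
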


The following is the upper bound for the Cheeger constant of a submanifold $P$:
\begin{theorem}\label{Main1} 
 Consider an isoperimetric comparison
constellation $\{ N^{n}, P^{m},M^m_W
\}$.
 Assume that the isoperimetric comparison space
 $\, M^m_W\,$ is $w$-balanced from below.
Assume, moreover, that 

\begin{enumerate}
\item  $\Sup_{t>0}(\frac{\Vol(D_t)}{\Vol(B^{W}_t)})<\infty$.
\item The limit $\lim_{t \to \infty}\frac{\Vol(S^{W}_t)}{\Vol(B^{W}_t)} $ exists
\end{enumerate}

Then 

\begin{equation}\label{ineq1}
\mathcal{I}_{\infty}(P) \leq \lim_{r \to \infty}\frac{\Vol(S^{W}_t)}{\Vol(B^{W}_t)} \quad.
\end{equation}

In particular, let $P^m$ be a complete and minimal submanifold properly immersed in a Cartan-Hadamard manifold  $N$ with sectional curvatures bounded from above as $K_N \leq b \leq 0$, and suppose that  $\Sup_{t>0}(\frac{\Vol(D_t)}{\Vol(B^{m,b}_t)})<\infty$.

Then 
\begin{equation}\label{statcor1}
\mathcal{I}_{\infty}(P) \leq (m-1)\sqrt{-b} \quad.
\end{equation}

\end{theorem}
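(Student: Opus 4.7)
The plan is to use the extrinsic balls $D_t$ themselves as test regions in the infimum defining $\mathcal{I}_\infty(P)$ and to show that, along a suitable sequence $t_k\to\infty$, the quotient $\Vol(\partial D_{t_k})/\Vol(D_{t_k})$ converges to $L := \lim_{t\to\infty}\Vol(S^{W}_t)/\Vol(B^{W}_t)$. Theorem \ref{thmIsopGeneral1} only provides a \emph{lower} bound for this quotient, so a separate ingredient is required; this is supplied by the coarea formula together with the orthogonal decomposition (\ref{radiality}), which forces $\|\gr^P r\|\leq 1$ everywhere on $P$. Applied to $r\vert_P$ it gives
\begin{equation*}
\int_0^t \Vol(\partial D_s)\,ds \,=\, \int_{D_t}\|\gr^P r\|\,dV \,\leq\, \Vol(D_t),
\end{equation*}
so $\Vol(\partial D_t)\leq (\Vol(D_t))'$ for a.e.\ $t$, equivalently $\Vol(\partial D_t)/\Vol(D_t)\leq G'(t)$ where $G(t) := \log\Vol(D_t)$.

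Next I would exploit the two hypotheses. By Theorem \ref{thmIsopGeneral1} the function $f(t):=\Vol(D_t)/\Vol(B^{W}_t)$ is monotone non-decreasing, and by hypothesis (1) it is bounded above. Hence $F(t):=\log f(t) = G(t)-\log\Vol(B^{W}_t)$ is non-decreasing and bounded on $[T_0,\infty)$ for any $T_0$ with $\Vol(D_{T_0})>0$, so $F'\geq 0$ lies in $L^1([T_0,\infty))$. Since
\begin{equation*}
F'(t) \,=\, G'(t) \,-\, \frac{\Vol(S^{W}_t)}{\Vol(B^{W}_t)},
\end{equation*}
I would pick a sequence of regular values $t_k\in[k,k+1]$ with $F'(t_k)\leq\int_k^{k+1}F'(s)\,ds\to 0$. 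For each such $t_k$ we then have
\begin{equation*}
\mathcal{I}_\infty(P) \,\leq\, \frac{\Vol(\partial D_{t_k})}{\Vol(D_{t_k})} \,\leq\, G'(t_k) \,=\, F'(t_k) \,+\, \frac{\Vol(S^{W}_{t_k})}{\Vol(B^{W}_{t_k})},
\end{equation*}
and letting $k\to\infty$ yields (\ref{ineq1}) thanks to hypothesis (2). For the minimal submanifold case, the choice $h\equiv 0$ and $w=w_b$ in Definition \ref{defCspace} forces $W=w_b$, so $M^{m}_{W}=\kam$, which is $w_b$-balanced from below; a direct computation gives $\lim_{t\to\infty}w_b(t)^{m-1}/\int_0^t w_b(s)^{m-1}\,ds=(m-1)\sqrt{-b}$, yielding (\ref{statcor1}).

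The only subtlety I foresee is regularity bookkeeping: the coarea identity and the smoothness of $\partial D_t$ each hold only for a.e.\ $t$ (Sard's theorem), while $F'(t_k)$ must also be small. All three conditions hold on sets of full measure, so the selection of $\{t_k\}$ is legitimate but must be coordinated. Hypothesis (2) is essential, since without it the argument would only deliver a $\liminf$-type statement.
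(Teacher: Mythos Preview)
Your proposal is correct and essentially identical to the paper's proof: the authors define the same function (their $F(t)$ is your $F'(t)=\bigl[\log\bigl(\Vol(D_t)/\Vol(B^W_t)\bigr)\bigr]'$), use the coarea inequality $\Vol(\partial D_t)\leq\Vol(D_t)'$ together with the monotonicity from Theorem~\ref{thmIsopGeneral1} to conclude $F\geq 0$ with $\int^\infty F<\infty$, then extract a sequence $t_i\to\infty$ with $F(t_i)\to 0$ and test $\mathcal{I}_\infty(P)$ against the exhaustion $\{D_{t_i}\}$. Your added care about coordinating the a.e.\ conditions (Sard, coarea, smallness of $F'$) is a refinement the paper omits.
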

\begin{proof}

Let us define 
\begin{equation}\label{def}
F(t):=\frac{\Vol(D_t)'}{\Vol(D_t)}-\frac{\Vol(S_t^{W})}{\Vol(B_t^{W})}=\left[\ln\left(\frac{\Vol(D_t)}{\Vol(B^W_t)}\right)\right]'
\end{equation}
By the co-area formula and applying Theorem \ref{thmIsopGeneral1} it is easy to see  that $F(t)$ is a non-negative function. Moreover, $\frac{\Vol(D_t)}{\Vol(B^{W}_t)}$ is non-decreasing (see \cite{MP}).

Integrating between $t_0 >0$ and $t>t_0$:
$$
\frac{\Vol(D_t)}{\Vol(B^W_t)}=\frac{\Vol(D_{t_0})}{\Vol(B^W_{t_0})}\, e^{\int_{t_0}^t F(s)\, ds}
$$
But on the other hand, from hypothesis (2) and the fact that  $\frac{\Vol(D_t)}{\Vol(B^{W}_t)}$ is non-decreasing, we know that $\lim_{t\to\infty}\frac{\Vol(D_t)}{\Vol(B^W_t)}=\sup_t\frac{\Vol(D_t)}{\Vol(B^W_t)}<\infty$. Then, since $F(t)) \geq 0\,\,\forall t>0$:
$$
\int_{t_0}^\infty F(s)ds<\infty
$$
and hence there is a monotone increasing sequence $\{t_i\}_{i=0}^\infty$ tending to infinity, such that:
\begin{equation}\label{limit}
\lim_{i\to\infty}F(t_i)=0
\end{equation}
%Observe that the above limit implies that $\lim_{i\to\infty}\frac{(\Vol(D_{t_i}))'}{\Vol(D_{t_i})}=\sqrt{-b}$.  

Let us consider now  the exhaustion $\{D_{t_i}\}_{i=1}^\infty$ of $P$ by these extrinsic balls.

By using equation (\ref{CheegerConstant}), we have that,

\begin{equation}
\mathcal{I}_{\infty}(P) \leq \frac{\Vol(\partial D_{t_i})}{\Vol(D_{t_i})} \leq \frac{(\Vol(D_{t_i}))'}{\Vol(D_{t_i})}\,\,\,\,\forall r_i
\end{equation}

On the other hand, since $\lim_{i\to\infty}F(t_i)=0$, then

\begin{equation}
%\begin{aligned}
\lim_{i\to\infty}\frac{(\Vol(D_{t_i}))'}{\Vol(D_{t_i}))}=\lim_{i\to\infty}\frac{\Vol(S^W_{t_i})}{\Vol(B^W_{t_i})}
%\end{aligned}
\end{equation}

and therefore

\begin{equation}
\mathcal{I}_{\infty}(P) \leq \lim_{i\to\infty}\frac{\Vol(S^W_{t_i})}{\Vol(B^W_{t_i})}
\end{equation}

Inequality (\ref{statcor1}) follows inmediately  taking into account that, as was shown in the examples above,  when $P$ is minimal in a Cartan-Hadamard manifold, then considering $h(r)=0 \,\,\,\,\forall r$ and considering $w(r)= w_b(r)$,  we have that $\{ N^{n}, P^{m}, \kam\}$ is a comparison constellation, with $\kam$ $w_b$-balanced from below. 

As by hypothesis,  $\Sup_{t>0}(\frac{\Vol(D_t)}{\Vol(B^{b,m}_t)})<\infty$ and we have that, 
\begin{equation}
\begin{aligned}
\lim_{t \to \infty}\frac{\Vol(S^{W}_t)}{\Vol(B^{W}_t)}&= \lim_{t \to \infty}\frac{\Vol(S^{0,m-1}_t)}{\Vol(B^{0,m}_t)} = 0 \quad \text{if $b=0$}\\
\lim_{t \to \infty}\frac{\Vol(S^{W}_t)}{\Vol(B^{W}_t)}&=\lim_{t \to \infty}\frac{\Vol(S^{b,m-1}_t)}{\Vol(B^{b,m}_t)} = (m-1)\sqrt{-b} \quad \text{if $b<0$}
\end{aligned}
\end{equation}
\noindent we now apply inequality (\ref{ineq1}).
\end{proof}

Now, we have the following result, which is a direct extension to Yau's classical result (see \cite{Y}) on minimal submanifolds, using the same techniques as in \cite{Cha3}:

\begin{theorem}\label{Main2}   
Consider an isoperimetric comparison
constellation  $\{ N^{n}, P^{m}, M^m_W
\}$. Assume that the isoperimetric comparison space
 $\, M^m_W\,$ is $w$-balanced from above.
 Assume, moreover, that  the limit $\lim_{r \to \infty}\frac{W'(r)}{W(r)} $ exists.

Then 

\begin{equation}
\mathcal{I}_{\infty}(P) \geq (m-1) \lim_{r \to \infty}\frac{W'(r)}{W(r)} \quad.
\end{equation}

In particular, let $P^m$ be a complete and minimal submanifold properly immersed in a Cartan-Hadamard manifold  $N$ with sectional curvatures bounded from above as $K_N \leq b \leq 0$.

Then 
\begin{equation}\label{statcor2}
\mathcal{I}_{\infty}(P) \geq (m-1)\sqrt{-b} \quad.
\end{equation}
\end{theorem}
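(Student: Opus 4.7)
The plan is to follow the classical Yau--Chavel strategy: produce a vector field on $P$ of norm at most one whose divergence admits a uniform positive lower bound, and then integrate by the divergence theorem. The natural candidate is $X=\nabla^P r$, which by (\ref{radiality}) satisfies $\|X\|\le\|\nabla^N r\|=1$, and whose divergence on $P$ is $\Delta^P r$. I would apply Theorem \ref{corLapComp} with $f(r)=r$ (so $f'\equiv 1\ge 0$, $f''\equiv 0$), obtaining
\[
\Delta^P r \;\ge\; -\eta_w(r)\,\|\nabla^P r\|^2 + m\bigl(\eta_w(r)+\langle\nabla^N r,H_P\rangle\bigr).
\]
Using $\mathcal{C}=-\langle\nabla^N r,H_P\rangle\le h(r)$, the bound $\|\nabla^P r\|^2\le 1$, and the sign condition $\eta_w\ge 0$ from (\ref{eqBalB}), the quadratic term is absorbed through $\eta_w(r)(m-\|\nabla^P r\|^2)\ge(m-1)\eta_w(r)$, leaving
\[
\Delta^P r\;\ge\; (m-1)\eta_w(r)-m\,h(r)\;=\;(m-1)\,\frac{W'(r)}{W(r)},
\]
where the last equality is precisely (\ref{eqLambdaDiffeq}).

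The second balance-from-above condition $\eta_W'\le 0$ says that $r\mapsto W'(r)/W(r)$ is non-increasing, so once the limit $L:=\lim_{r\to\infty} W'(r)/W(r)$ exists, $W'(r)/W(r)\ge L$ for every $r>0$. Combining this with the previous inequality yields the pointwise lower bound $\Delta^P r \ge (m-1)L$ on $P$, away from the (at most discrete) set $r^{-1}(0)$. For any precompact open $\Omega\subset P$ with smooth boundary, the divergence theorem together with $\|\nabla^P r\|\le 1$ gives
\[
(m-1)L\,\Vol(\Omega)\;\le\;\int_\Omega \Delta^P r\,dV \;=\;\int_{\partial\Omega}\langle\nabla^P r,\nu\rangle\,dA \;\le\;\Vol(\partial\Omega),
\]
and taking the infimum over $\Omega$ in (\ref{CheegerConstant}) yields $\mathcal{I}_\infty(P)\ge (m-1)L$.

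For the Cartan--Hadamard minimal specialization one has $h\equiv 0$ and $w=w_b$, so that $W=w_b$ and $\eta_W(r)=\sqrt{-b}\coth(\sqrt{-b}\,r)$ (resp.\ $1/r$ when $b=0$), which decreases monotonically to $\sqrt{-b}$ (resp.\ $0$) as $r\to\infty$. Both balance-from-above conditions hold trivially in this case, and the general inequality specializes to (\ref{statcor2}).

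The delicate point I expect is the absorption step: retaining the sharp factor $(m-1)$ requires using $\eta_w\ge 0$ together with $\|\nabla^P r\|\le 1$ simultaneously, which is exactly the reason the argument needs balance from above rather than balance from below. A minor technicality is the possible non-smoothness of $r$ above the pole when $o\in P$; this is handled by a standard exhaustion removing a small neighborhood of $r^{-1}(0)$ before applying the divergence theorem, the flux of $\nabla^P r$ across the shrinking geodesic spheres vanishing in the limit since $m\ge 2$.
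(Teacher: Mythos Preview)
Your proof is correct and follows essentially the same route as the paper's own argument: apply Theorem~\ref{corLapComp} with $f(r)=r$, use $\eta_w\ge 0$ and $\|\nabla^P r\|\le 1$ to get $\Delta^P r\ge (m-1)\eta_w(r)-mh(r)=(m-1)W'(r)/W(r)$, then invoke the divergence theorem and the monotonicity $\eta_W'\le 0$ to pass to the limit. The only cosmetic difference is that you take the pointwise bound $W'/W\ge L$ before integrating, whereas the paper integrates first and then bounds; your added remarks on why balance from above is the relevant hypothesis and on the pole technicality are accurate and not present in the paper's proof.
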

\begin{proof}

From equation  (\ref{eqLambdaDiffeq}) in definition  \ref{defCspace} of the isoperimetric comparison space, we have:

\begin{equation}\label{eqLambdaDiffeq2}
\begin{aligned}
(m-1)\frac{W'(r)}{W(r)}+\eta_w(r)&= \, m\,\left(\eta_w(r) - h(r)\right)
\end{aligned}
\end{equation}

On the other hand, from Theorem  \ref{corLapComp}:
\begin{equation} \label{eqLap1}
\begin{aligned}
\Delta^{P}r \, \geq \, &\left(m-
\Vert \nabla^{P} r \Vert^{2} \right)\eta_{w}(r) +m 
\langle \, \nabla^{N}r, \, H_{P}  \, \rangle\geq\\
& (m-1)\eta_{w}(r) +m 
\langle \, \nabla^{N}r, \, H_{P}  \, \rangle\geq\\
&(m-1)\eta_{w}(r) -m\,h(r)=\\
&m\left(\eta_w(r)-h(r)\right)-\eta_{w}(r) 
\end{aligned}
\end{equation}
Then, applying  (\ref{eqLambdaDiffeq2})
\begin{equation}
\triangle^Pr\geq (m-1)\frac{W'(r)}{W(r)}
\end{equation}
 %Applying Theorem \ref{corLapComp} to the restriction to $P$ of the extrinsic distance to the pole, $r$, we have

%\begin{equation}
%\Delta^P r \geq (m-1) \sqrt{-b} \coth\sqrt{-b}t \geq (m-1)\sqrt{-b}
%\end{equation}

Now, if we consider a domain $\Omega \subseteq P$, which is precompact and with smooth closure, we have, given its outward unitary normal vector field, $\nu$:
$$\langle \nu, \nabla^P r\rangle \leq 1$$
hence by applying divergence Theorem, and taking into account that $\frac{W'(r)}{W(r)}$ is non-increasing
\begin{equation}
\begin{aligned}
\Vol(\partial \Omega) &\geq \int_{\partial \Omega} \langle \nu, \nabla^P r \rangle d\mu \\=\int_{\Omega} \Delta^P r d\sigma \geq \int_{\Omega} \frac{W'(r)}{W(r)}d\sigma &\geq(m-1)\lim_{r \to \infty}\frac{W'(r)}{W(r)} \Vol(\Omega)
\end{aligned}
\end{equation}

As $$\frac{\Vol(\partial \Omega)}{\Vol(\Omega)} \geq (m-1)\lim_{r \to \infty}\frac{W'(r)}{W(r)}$$ for any domain $\Omega$, we have the result.

Inequality (\ref{statcor2}) follows inmediately  taking into account that, as in the proof of Theorem \ref{Main1} and in the examples above,  when $P$ is minimal in a Cartan-Hadamard manifold, then we have that $\{ N^{n}, P^{m}, \kam\}$ is a comparison constellation ($h(r)=0 \,\,\,\,\forall r$ and $w(r)= w_b(r)$), with the isoperimetric comparison space used as a model $M^m_W=\kam$ $w_b$-balanced from above. Moreover, $\lim_{r \to \infty}\frac{W'(r)}{W(r)}=\sqrt{-b}$.
\end{proof}
\bigskip

%%%%%%%%%%%%%%%%%
%%%%%%%%%%%%%%%%%%%%
\section{Applications: Cheeger constant of minimal submanifolds of Cartan-Hadamard manifolds}
%%%%%%%%%%%%%%%%%%%%%%%%%%%%%%%%%%%%%%%

%%%%%%%%%%%%%%%%%%%%%%%%%%%%%%%%%%%%%%%%%%%%%%%%%%%
\subsection{Isoperimetric results and Chern-Osserman Inequality}
\label{secIsopRes}
%%%%%%%%%%%%%%%%%%%%%%%%%%%%%%%%%%%%%%%%%%%%%%%%%%%

This subsection provides two results which describe how minimality and the control on the total extrinsic curvature of the submanifold implies, among other topological consequences, having finite volume growth. The first (Theorem \ref{thmAnderson}) is due to M.T. Anderson, and the second (Theorem \ref{ChernOss1}) was proved in the Euclidean setting by S.S. Chern and R. Osserman, with an extension to the Hyperbolic setting due to Q. Chen. These results will be used to prove Corollaries \ref{Cor2} and \ref{Cor3} in the next Subsection \S4.2.

\begin{theorem}\label{thmAnderson}(see \cite{A1}).
Let $P^m$ be an oriented, connected and complete minimal submanifold immersed in the Euclidean space $\erre^n$. Let us suppose that $\int_P\Vert B^P \Vert^m d\sigma<\infty$, where $B^P$ is the second fundamental form of $P$. Then

\begin{enumerate}
\item $P$ has finite topological type.
\item $\Sup_{t>0}(\frac{\Vol(\partial D_t)}{\Vol(S^{0,m-1}_t)})<\infty \quad.$
\item $-\chi(P)=\int_P \Phi d\sigma+\lim_{t \to \infty}\frac{\Vol(\partial D_t)}{\Vol(S_t^{0,m-1})} \quad.$
\end{enumerate}
where $\chi(P)$ is the Euler characteristic of $P$ and $\Phi$ is the Gauss-Bonnet-Chern form on $P$, and $S_t^{b,m-1}$ denotes the geodesic $t$-sphere in $\kam$.
\end{theorem}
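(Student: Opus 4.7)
The plan is to convert the global integrability $\int_{P}\|B^{P}\|^{m}\,d\sigma<\infty$ into pointwise decay of the second fundamental form outside a compact set, then to exploit this decay together with the monotonicity formula for minimal submanifolds of $\erre^{n}$ to control the volume growth and the topology, and finally to pass to the limit in a Chern--Gauss--Bonnet formula written on the exhausting family of extrinsic balls $D_{t}$.

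\emph{Step 1: $\varepsilon$-regularity.} I would first establish a small-energy regularity estimate in the spirit of Choi--Schoen: starting from Simons' identity for $\|B^{P}\|$ on a minimal submanifold and combining it with a Moser iteration (or a blow-up/compactness argument), one shows that there exists $\varepsilon_{0}>0$ such that for any extrinsic ball centred at $x\in P$ of radius $r$,
\begin{equation*}
\int_{P\cap B^{N}_{r}(x)}\|B^{P}\|^{m}\,d\sigma<\varepsilon_{0}\ \Longrightarrow\ \|B^{P}\|(x)\leq C/r.
\end{equation*}
Since the total curvature is finite, the small-energy hypothesis fails only on a compact exceptional set; outside that set $\|B^{P}\|$ decays to zero with integrable estimates.

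\emph{Step 2: Structure at infinity and volume growth.} Coupling the curvature decay with the classical monotonicity of $\Theta(t):=\Vol(D_{t})/\Vol(B^{0,m}_{t})$, each end of $P$ is shown, outside a compact set, to be a smooth graph of small $C^{1}$-norm over the complement of a disc in an asymptotic affine $m$-plane. Only finitely many ends can occur, since each contributes at least $1$ to the density at infinity and $\Theta(t)$ remains bounded; this proves (1). The boundedness of $\Theta(\infty)$, together with the co-area relation $\Vol(D_{t})'=\Vol(\partial D_{t})$ and an $F(t)$-type error analysis as in the proof of Theorem \ref{Main1}, translates into the supremum bound (2).

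\emph{Step 3: Chern--Gauss--Bonnet and the limit.} For every $t$ such that $\partial D_{t}$ is smooth, the Chern--Gauss--Bonnet theorem on the compact manifold with boundary $D_{t}$ reads
\begin{equation*}
\chi(D_{t})=\int_{D_{t}}\Phi\,d\sigma+\int_{\partial D_{t}}\mathcal{T}_{t},
\end{equation*}
where $\mathcal{T}_{t}$ is the Chern transgression form, a polynomial in the second fundamental form of $\partial D_{t}\subset P$. By Step~2, $D_{t}$ is diffeomorphic to $P$ with small collar neighbourhoods of the ends removed once $t$ is large, so $\chi(D_{t})$ stabilises to $\chi(P)$; simultaneously, the asymptotic planarity of the ends identifies $\int_{\partial D_{t}}\mathcal{T}_{t}$ in the limit with the normalised boundary quotient $\Vol(\partial D_{t})/\Vol(S^{0,m-1}_{t})$. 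Rearranging with the correct sign convention for $\Phi$ produces the identity (3).

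\emph{Main obstacle.} The heart of the argument is the $\varepsilon$-regularity step: it is what promotes an integral smallness hypothesis into a pointwise control on $\|B^{P}\|$, and hence into the graphical description of the ends. Steps~2 and~3 are then a fairly standard exhaustion and transgression argument conditional on that decay, the main book-keeping issue being to match the limit of $\int_{\partial D_{t}}\mathcal{T}_{t}$ with the volume density so that the constant in (3) is exactly $\lim \Vol(\partial D_{t})/\Vol(S^{0,m-1}_{t})$.
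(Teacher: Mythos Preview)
The paper does not prove this theorem at all: it is quoted from Anderson's preprint \cite{A1} as a black box (note the ``see \cite{A1}'' in the statement and the absence of any proof environment following it), and is used only as input to Corollary~\ref{Cor2}. So there is no ``paper's own proof'' to compare your proposal against.

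That said, your sketch is a fair summary of the standard strategy underlying Anderson's result: $\varepsilon$-regularity to upgrade integral curvature smallness to pointwise decay, graphical structure of the ends and finiteness of their number via monotonicity, and a Chern--Gauss--Bonnet argument on the exhaustion $D_{t}$ with boundary transgression terms. Two points to be careful about if you ever write this out in full. First, in Step~2 you invoke boundedness of $\Theta(t)=\Vol(D_{t})/\Vol(B^{0,m}_{t})$ to bound the number of ends, but the \emph{finiteness} of $\Theta(\infty)$ is itself part of what must be proved from the curvature hypothesis (monotonicity alone gives only that the limit exists in $[1,\infty]$); the correct logic is that the graphical description of each end, obtained from the curvature decay, yields Euclidean volume growth on each end, and only then does one get $\Theta(\infty)<\infty$ and hence finitely many ends. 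Second, the identification in Step~3 of $\lim_{t\to\infty}\int_{\partial D_{t}}\mathcal{T}_{t}$ with $\lim_{t\to\infty}\Vol(\partial D_{t})/\Vol(S^{0,m-1}_{t})$ requires a genuine computation showing that the higher-order terms in the transgression form (those involving $B^{P}$) vanish in the limit thanks to the decay from Step~1; this is where the precise rate of decay matters, not just that $\|B^{P}\|\to 0$.
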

\begin{remark}\label{remarkone}
Note that, on applying inequality (\ref{eqIsopGeneralB}) in Theorem \ref{thmIsopGeneral1} to the submanifold $P$ the theorem above, we conclude that, under the assumptions of Theorem  \ref{thmAnderson}, we have the following bound for the volume growth
\begin{equation}
\Sup_{t>0}(\frac{\Vol(D_t)}{\Vol(B^{0,m}_t)}) \leq \Sup_{t>0}(\frac{\Vol(\partial D_t)}{\Vol(S^{0,m-1}_t)})<\infty \quad.
\end{equation}
where $B_t^{b,m}$ denotes the geodesic $t$-ball in $\kam$.
\end{remark}

On the other hand, we have that Chern-Osserman Inequality is satisfied by complete and minimal surfaces in a simply connected real space form with constant sectional curvature $b \leq 0$, $\kan$. Namely 

\begin{theorem}\label{ChernOss1}(see \cite{A1}, \cite{QCh1} and \cite{QCh3}. For an alternative proof, see \cite{GP}).  Let $P^2$ be an complete minimal surface immersed in a simply connected real space form with constant sectional curvature $b \leq 0$, $\kan$. Let us suppose that $\int_P\Vert B^P \Vert^2 d\sigma<\infty$. Then

\begin{enumerate}
\item $P$ has finite topological type.
\item $\Sup_{t>0}(\frac{\Vol(D_t)}{\Vol(B^{b,2}_t)})<\infty \quad.$
\item $-\chi(P)\leq \frac{\int_P \Vert B^P \Vert ^2}{4\pi}-\Sup_{t>0}\frac{\Vol(D_t)}{\Vol(B_t^{b,2})} \quad.$
\end{enumerate}
where $\chi(P)$ is the Euler characteristic of $P$. 
\end{theorem}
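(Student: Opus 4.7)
My plan is to apply the Gauss--Bonnet theorem on the exhausting family of smooth extrinsic balls $D_t$, use the Gauss equation to replace $K_P$ by a term involving $\Vert B^P\Vert^2$ and the ambient curvature $b$, and then pass to the limit $t\to\infty$ using the Laplacian comparison of Theorem~\ref{corLapComp} and the monotonicity of $\psi(t):=\Vol(D_t)/\Vol(B^{b,2}_t)$ from Theorem~\ref{thmIsopGeneral1}. I handle finite topological type, conclusion~(1), first, since it is needed in order to take $\chi(D_t)\to\chi(P)$ in the limit: because $P$ is minimal in $\kan$, the Gauss equation gives $K_P = b - \tfrac{1}{2}\Vert B^P\Vert^2 \leq 0$; in the Euclidean case $\int_P |K_P|\,d\sigma = \tfrac{1}{2}\int_P\Vert B^P\Vert^2\,d\sigma<\infty$, so Huber's theorem identifies $P$ conformally with a closed Riemann surface minus finitely many points, and for $b<0$ the same conclusion follows from the pointwise decay of $\Vert B^P\Vert$ at infinity of Q.~Chen \cite{QCh1,QCh3}.

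For each $t$ making $\partial D_t$ smooth (a dense set, by Sard and the smoothness of $r\vert_P$), Gauss--Bonnet combined with the Gauss equation gives
\begin{equation*}
2\pi\,\chi(D_t) \;=\; b\,\Vol(D_t) \,-\, \tfrac{1}{2}\!\int_{D_t}\Vert B^P\Vert^2\,d\sigma \,+\, \int_{\partial D_t}\kappa_g\,ds.
\end{equation*}
The critical step is a lower bound for the boundary integral. With $\nu = \nabla^P r/\Vert\nabla^P r\Vert$, the geodesic curvature decomposes as $\kappa_g = (\Delta^P r - \Hess^P r(\nu,\nu))/\Vert\nabla^P r\Vert$. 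Combining the Laplacian estimate $\Delta^P r \geq h_b(r)$ from Theorem~\ref{corLapComp} with the dual Hessian comparison of \cite{GreW} applied to $\Hess^P r(\nu,\nu)$, one obtains $\kappa_g \geq h_b(r)\,\Vert\nabla^P r\Vert$. Integrating this along $\partial D_t$ and following up with the divergence theorem, the coarea formula, and an integration by parts using the explicit form of $w_b$ should produce the precise inequality
\begin{equation*}
\int_{\partial D_t}\kappa_g\,ds \,+\, b\,\Vol(D_t) \;\geq\; 2\pi\,\psi(t),
\end{equation*}
which specialises in the model case to the sharp identity $\int_{S^{b,1}_t}\kappa_g\,ds + b\,\Vol(B^{b,2}_t) = 2\pi$.

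Feeding this lower bound into the Gauss--Bonnet identity and rearranging yields $\psi(t) \leq \chi(D_t) + (4\pi)^{-1}\!\int_{D_t}\Vert B^P\Vert^2\,d\sigma$. Conclusion~(1) forces $\chi(D_t)=\chi(P)$ for $t$ large enough, so the right-hand side is bounded above by $\chi(P) + (4\pi)^{-1}\!\int_P \Vert B^P\Vert^2\,d\sigma < \infty$, giving immediately $\Sup_{t>0}\psi(t)<\infty$, which is conclusion~(2). Because $\psi$ is non-decreasing by Theorem~\ref{thmIsopGeneral1}, its supremum equals its limit, and letting $t\to\infty$ furnishes conclusion~(3). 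The main obstacle is the precise form of the boundary estimate: a naive use of just the Laplacian comparison or just the Hessian comparison would lose the factor $\psi(t)$ and yield only a weaker inequality (without the defect term), so one must combine the two comparisons and exploit the sharpness of the model Gauss--Bonnet identity so that the excess picked up on $P$ is quantified exactly by the volume-growth defect $\psi(t)-1$.
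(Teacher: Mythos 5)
First, a point of comparison: the paper does not prove this theorem at all — it is quoted from \cite{A1}, \cite{QCh1}, \cite{QCh3} and \cite{GP} — so your attempt has to stand on its own, and as written it has a genuine gap at exactly the step you flag as "critical". The pointwise bound $\kappa_g\geq h_b(r)\,\Vert\gr^P r\Vert$ is not a consequence of the Greene--Wu/Laplacian comparisons. Indeed, writing $e$ for the unit tangent to the level curve, one has $\kappa_g\,\Vert\gr^P r\Vert=\Delta^P r-\Hess^P r(\nu,\nu)=\Hess^P r(e,e)$, and on the submanifold $\Hess^P r(e,e)=\Hess^N r(e,e)+\langle B^P(e,e),\nabla^\perp r\rangle = h_b(r)+\langle B^P(e,e),\nabla^\perp r\rangle$ in $\kan$. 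Your claimed inequality is therefore equivalent to $h_b(r)\Vert\nabla^\perp r\Vert^2+\langle B^P(e,e),\nabla^\perp r\rangle\geq 0$, which fails in general: the hypothesis $\int_P\Vert B^P\Vert^2\,d\sigma<\infty$ gives no pointwise control of $B^P$ at any finite radius, and Cauchy--Schwarz only bounds the mixed term by $\Vert B^P(e,e)\Vert\,\Vert\nabla^\perp r\Vert$, which can dominate the quadratic term. The comparison theorems control $\Hess^N r$, not $\Hess^P r$; the difference is precisely this uncontrolled second-fundamental-form term, and this is why the actual proofs (Anderson, Chen, Chen--Cheng, \cite{GP}) do not prove a boundary estimate valid for every $t$, but instead use the finiteness of the total curvature to show that the boundary terms become model-like only asymptotically, along carefully chosen sequences $t_i\to\infty$.

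Even granting the pointwise bound, the integral inequality $\int_{\partial D_t}\kappa_g\,ds+b\,\Vol(D_t)\geq 2\pi\,\psi(t)$ is asserted ("should produce") rather than derived, and the natural route does not give it: divergence theorem plus the Laplacian comparison yield $\int_{\partial D_t}\kappa_g\,ds\geq h_b(t)\int_{\partial D_t}\Vert\gr^P r\Vert\,ds=h_b(t)\int_{D_t}\Delta^P r\,d\sigma\geq h_b(t)\int_{D_t}h_b(r)\,d\sigma$, a chain that is an equality in the model (giving $2\pi w_b'(t)=2\pi-b\Vol(B^{b,2}_t)$), but on $P$ the monotonicity of $\psi$ means the area of $D_t$ is skewed toward large $r$, where $h_b$ is smallest, so the comparison of $\int_{D_t}h_b(r)\,d\sigma$ with $\Vol(D_t)$ goes in the wrong direction and no factor $\psi(t)$ can be extracted this way. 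Separately, for $b<0$ conclusion (1) cannot come from Huber (here $K_P\leq b<0$ and the area is infinite, so $\int_P|K_P|\,d\sigma=\infty$); you delegate it to the decay estimates of Chen--Cheng, which is essentially invoking the theorem being proved. The honest options are to cite the result, as the paper does, or to reproduce the subsequence/asymptotic boundary analysis of \cite{QCh1}, \cite{QCh3}, \cite{GP}; your outline does not close the boundary estimate.
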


%%%%%%%%%%%%%%%%%%%%%%%%%%%%%%%%%%%%%%%%%%%%%%%%%%%
\subsection{The Corollaries}
\label{secIsopRes2}
%%%%%%%%%%%%%%%%%%%%%%%%%%%%%%%%%%%%%%%%%%%%%%%%%%%
In this subsection, we are going to state and prove the following results, which are direct consequences of the main theorems in Section \S.3 and Theorems  \ref{thmAnderson} and \ref{ChernOss1} in Subsection \S4.1.

The first Corollary \ref{Cor1} is a direct application of Theorems \ref{Main1} and \ref{Main2}.

\begin{corollary}\label{Cor1} 
 Let $P^m$ be a complete and minimal submanifold properly immersed in a Cartan-Hadamard manifold  $N$ with sectional curvatures bounded from above as $K_N \leq b \leq 0$.
 Let us suppose that $\Sup_{t>0}(\frac{\Vol(D_t)}{\Vol(B^{b,m}_t)})<\infty$

Then 
\begin{equation}
\mathcal{I}_{\infty}(P) =(m-1) \sqrt{-b} \quad.
\end{equation}
\end{corollary}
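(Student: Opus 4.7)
The plan is to combine Theorems \ref{Main1} and \ref{Main2}; the hypotheses of this corollary are precisely what is required to invoke both of them simultaneously. First I would set up the comparison constellation: since $P^m$ is minimal, take the bounding function $h \equiv 0$, and since the sectional curvatures of $N$ satisfy $K_N \leq b \leq 0$, take $w = w_b$. As recorded in the first item of the Examples of comparison constellations following Definition \ref{defConstellatNew2}, under these choices the differential equation $(\ref{eqLambdaDiffeq})$ gives $W = w_b$, so the isoperimetric comparison space is $M_W^m = \kam$, the triple $\{N^n, P^m, \kam\}$ is a comparison constellation on $[0,R]$ for every $R > 0$, and $\kam$ is simultaneously $w_b$-balanced from above and from below.

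Next I would apply Theorem \ref{Main1}. Hypothesis (1) of that theorem is the assumed volume-growth bound $\Sup_{t>0}(\Vol(D_t)/\Vol(B_t^{b,m})) < \infty$. Hypothesis (2) is verified by the explicit computation $\lim_{t\to\infty}\Vol(S_t^{b,m-1})/\Vol(B_t^{b,m})$, which equals $0$ if $b=0$ and $(m-1)\sqrt{-b}$ if $b<0$. This is precisely the content of the specialized statement $(\ref{statcor1})$ at the end of Theorem \ref{Main1}, and it yields the upper bound $\mathcal{I}_{\infty}(P) \leq (m-1)\sqrt{-b}$.

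Then I would apply Theorem \ref{Main2}. Since $W = w_b$, the required limit $\lim_{r\to\infty} W'(r)/W(r) = \lim_{r\to\infty} w_b'(r)/w_b(r)$ exists and equals $\sqrt{-b}$ (interpreting the $b=0$ case as $0$). Multiplication by $(m-1)$ produces $\mathcal{I}_{\infty}(P) \geq (m-1)\sqrt{-b}$, matching the specialized statement $(\ref{statcor2})$. Combining the two bounds yields the claimed equality.

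There is essentially no obstacle here: the corollary is a direct specialization of two results already proved, and its whole content lies in observing that the geometric setup, namely minimality in a Cartan--Hadamard manifold together with an extrinsic volume-growth bound, simultaneously activates both main theorems. The only conceptual remark worth making is the asymmetry of hypotheses, which also explains why equality is non-trivial: the upper bound genuinely requires the finite volume-growth assumption, while the lower bound coming from Theorem \ref{Main2} is automatic in this ambient geometry via the Laplacian comparison for the extrinsic distance.
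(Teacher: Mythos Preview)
Your proposal is correct and follows exactly the same approach as the paper's proof, which simply states that the result is a direct consequence of inequalities (\ref{statcor1}) and (\ref{statcor2}) in Theorems \ref{Main1} and \ref{Main2}. Your write-up merely expands the details of how the comparison constellation is set up and why both theorems apply, but the argument is identical.
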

\begin{proof}
This is a direct consequence of inequalities (\ref{statcor1}) and (\ref{statcor2}) in Theorem \ref{Main1} and Theorem \ref{Main2}. 
\end{proof}

The second and the third corollaries \ref{Cor2} and \ref{Cor3} are based on Theorems \ref{thmAnderson} and \ref{ChernOss1}.

When we consider minimal submanifolds in $\erre^n$, we have the following result:
\begin{corollary}\label{Cor2} 
 Let $P^m$ be a complete and minimal submanifold properly immersed in $\erre^n$, with finite total extrinsic curvature $\int_P\Vert B^P \Vert^m d\sigma<\infty$.

Then 
\begin{equation}
\mathcal{I}_{\infty}(P) =0 \quad.
\end{equation}
\end{corollary}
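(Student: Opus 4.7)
The plan is to combine Anderson's result on the finiteness of volume growth for minimally immersed submanifolds in Euclidean space with finite total extrinsic curvature (Theorem \ref{thmAnderson}) with the upper Cheeger bound already established in the euclidean case of Theorem \ref{Main1} (equivalently, Corollary \ref{Cor1} with $b=0$).

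First I would observe that since $\erre^n$ is a Cartan-Hadamard manifold with sectional curvature identically zero, the specific geometric bound $K_N \leq 0$ is satisfied with $b = 0$. Thus, to be in the hypotheses of the Euclidean specialization appearing at the end of Theorem \ref{Main1}, the only missing ingredient is the volume growth condition $\Sup_{t>0}\bigl(\Vol(D_t)/\Vol(B^{0,m}_t)\bigr) < \infty$.

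Next, I would invoke Theorem \ref{thmAnderson}: the assumption $\int_P \Vert B^P\Vert^m d\sigma < \infty$ together with minimality and completeness of $P$ in $\erre^n$ forces
\begin{equation*}
\Sup_{t>0}\frac{\Vol(\partial D_t)}{\Vol(S^{0,m-1}_t)} < \infty.
\end{equation*}
As noted in Remark \ref{remarkone}, the isoperimetric comparison inequality (\ref{eqIsopGeneralB}) in Theorem \ref{thmIsopGeneral1}, applied to the comparison constellation $\{\erre^n, P^m, \erre^m\}$ (which is $w_0$-balanced from below), then upgrades this boundary-volume estimate to a bound on the full volume quotient:
\begin{equation*}
\Sup_{t>0}\frac{\Vol(D_t)}{\Vol(B^{0,m}_t)} \leq \Sup_{t>0}\frac{\Vol(\partial D_t)}{\Vol(S^{0,m-1}_t)} < \infty.
\end{equation*}

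With this finite volume growth in hand, the final step is a direct application of inequality (\ref{statcor1}) in Theorem \ref{Main1} (or equivalently Corollary \ref{Cor1}) with $b=0$, which yields $\mathcal{I}_{\infty}(P) \leq (m-1)\sqrt{-b} = 0$. Since the Cheeger constant is manifestly non-negative from its definition (\ref{CheegerConstant}), we conclude $\mathcal{I}_{\infty}(P) = 0$. There is no serious obstacle here: the entire argument is a concatenation of Anderson's theorem, Remark \ref{remarkone}, and the main upper-bound Theorem \ref{Main1}, each of which applies verbatim in the Euclidean minimal setting.
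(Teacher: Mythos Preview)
Your proof is correct and follows essentially the same route as the paper: Anderson's theorem plus Remark~\ref{remarkone} give finite extrinsic volume growth, and then the Euclidean case of Theorem~\ref{Main1} (equivalently Corollary~\ref{Cor1} with $b=0$) forces $\mathcal{I}_\infty(P)\le 0$, hence $=0$. The only cosmetic difference is that the paper spells out the constellation $\{\erre^n,P^m,\erre^m\}$ and the limit $\lim_{t\to\infty}\Vol(S^{0,m-1}_t)/\Vol(B^{0,m}_t)=0$ explicitly rather than citing~(\ref{statcor1}) directly.
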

\begin{proof}
In this case,  taking  $h(r)=0 \,\,\,\forall r$ and $w_0(r)= r$, we have that $\{ \erre^{n}, P^{m}, \erre^{m}\}$ is a comparison constellation bounded from above, with $\erre^m$ $w_0$-balanced from below. Hence, we apply Theorem \ref{thmIsopGeneral1} to obtain
\begin{equation}\label{isopBound}
\frac{\Vol(D_t)}{\Vol(B^{0,m}_t)} \leq
\frac{\Vol(\partial D_t)}{\Vol(S^{0,m-1}_t)} \,\,\,\,\,\textrm{for all}\,\,\,
t>0  \quad .
\end{equation}

Therefore, as the total extrinsic curvature of $P$ is finite, by applying Theorem \ref{thmAnderson}, inequality (\ref{isopBound}) and Remark \ref{remarkone}, we have
$$\Sup_{t>0}(\frac{\Vol(D_t)}{\Vol(B^{0,m}_t)})<\infty$$
Finally,
$$\lim_{t \to \infty}\frac{\Vol(S^{0,m-1}_t)}{\Vol(B^{0,m}_t)} = \lim_{t \to \infty} \frac{m}{t} =0$$

Hence, applying Theorem \ref{Main1},  $\mathcal{I}_{\infty}(P) \leq 0$, so $\mathcal{I}_{\infty}(P)= 0$.
\end{proof}

 Corollary \ref{Cor2} can be extended to complete and minimal surfaces (properly) immersed in the  Hyperbolic space, with finite total extrinsic curvature: 
 
\begin{corollary}\label{Cor3} 
 Let $P^2$ be a complete and minimal surface immersed in $\kan$ with finite total extrinsic curvature $\int_P\Vert B^P \Vert^2 d\sigma<\infty$.

Then 
\begin{equation}
\mathcal{I}_{\infty}(P) =\sqrt{-b} \quad.
\end{equation}
\end{corollary}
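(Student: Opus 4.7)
The plan is to combine the previous two main theorems with the Chern--Osserman type result for minimal surfaces in $\kan$ (Theorem \ref{ChernOss1}). Since $P^2$ is minimal in the Cartan--Hadamard manifold $\kan$ with $K_N \equiv b \leq 0$, the setting is automatically an isoperimetric comparison constellation $\{\kan, P^2, \mathbb{K}^2(b)\}$ with bounding function $h(r)=0$ and warping function $w_b(r)$, and the comparison space $\mathbb{K}^2(b)$ is $w_b$-balanced both from above and from below (as noted in the Examples).

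For the lower bound, I would invoke Theorem \ref{Main2} directly. Since $\mathbb{K}^2(b)$ is $w_b$-balanced from above and $\lim_{r\to\infty}\tfrac{W'(r)}{W(r)} = \lim_{r\to\infty}\tfrac{w_b'(r)}{w_b(r)} = \sqrt{-b}$, inequality (\ref{statcor2}) with $m=2$ yields $\mathcal{I}_\infty(P) \geq \sqrt{-b}$. No extra work is required at this step because minimality and curvature bounds are all that the lower bound half needs.

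For the upper bound, the key is to verify the finite volume growth hypothesis of Theorem \ref{Main1}, namely $\sup_{t>0}\tfrac{\Vol(D_t)}{\Vol(B^{b,2}_t)}<\infty$. This is exactly the content of item (2) in Theorem \ref{ChernOss1} applied to $P^2 \subset \kan$ under the assumption $\int_P\|B^P\|^2\,d\sigma<\infty$. With this in hand, I apply Theorem \ref{Main1} with $m=2$, $h\equiv 0$, $w=w_b$, noting that the isoperimetric quotient of the model satisfies
\begin{equation*}
\lim_{t\to\infty}\frac{\Vol(S^{b,1}_t)}{\Vol(B^{b,2}_t)} \;=\; \sqrt{-b},
\end{equation*}
which is a direct computation for the hyperbolic plane of curvature $b<0$. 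Hence inequality (\ref{statcor1}) gives $\mathcal{I}_\infty(P)\leq \sqrt{-b}$.

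Combining the two inequalities produces $\mathcal{I}_\infty(P)=\sqrt{-b}$, proving the corollary. The only non-routine ingredient is the volume-growth bound supplied by the Chern--Osserman inequality; everything else is a direct specialisation of Theorems \ref{Main1} and \ref{Main2} to the $m=2$ hyperbolic setting, so I do not anticipate any genuine obstacle beyond correctly matching hypotheses.
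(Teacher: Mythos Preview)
Your proof is correct and follows essentially the same route as the paper: the paper invokes Theorem~\ref{ChernOss1} to obtain $\sup_{t>0}\frac{\Vol(D_t)}{\Vol(B^{b,2}_t)}<\infty$ and then applies Corollary~\ref{Cor1} with $m=2$, which in turn is just the combination of inequalities (\ref{statcor1}) and (\ref{statcor2}) from Theorems~\ref{Main1} and \ref{Main2}. You have simply unpacked Corollary~\ref{Cor1} into its two constituent halves, so the arguments are identical in substance.
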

\begin{proof}
As the total extrinsic curvature of $P$ is finite, by applying Theorem \ref{ChernOss1} we have:
$$\Sup_{t>0}(\frac{\Vol(D_t)}{\Vol(B^{b,2}_t)})<\infty$$

Then, apply Corollary \ref{Cor1} with $m=2$.
\end{proof}

\end{document}